\documentclass[a4paper]{amsart}
\usepackage{amsmath,amssymb,amsxtra,amsfonts,amsthm,url}

\usepackage{tikz}
\usetikzlibrary{arrows,backgrounds,decorations,automata}

\theoremstyle{plain}
\newtheorem{theorem}{Theorem}
\newtheorem{lemma}[theorem]{Lemma}

\newtheorem{proposition}[theorem]{Proposition}
\newtheorem{conjecture}[theorem]{Conjecture}

\theoremstyle{definition}

\begin{document}


\title[Mind the gap]{The minimal growth of a $k$-regular sequence}

\author{Jason P. Bell}
\address{Department of Pure Mathematics, University of Waterloo, Waterloo, Canada}
\email{jpbell@uwaterloo.ca}

\author{Michael Coons}
\address{School of Math.~and Phys.~Sciences\\
University of Newcastle\\
Callaghan\\
Australia}
\email{Michael.Coons@newcastle.edu.au}

\author{Kevin G. Hare}
\address{Department of Pure Mathematics, University of Waterloo, Waterloo, Canada}
\email{kghare@uwaterloo.ca}
\date{\today}

\thanks{Research of J.~P.~Bell was supported by NSERC grant 31-611456, the research of M.~Coons was supported by ARC grant DE140100223, and the research of K.~G.~Hare was partially supported by NSERC.}

\begin{abstract} We determine a lower gap property for the growth of an unbounded \(\mathbb{Z}\)-valued \(k\)-regular sequence. In particular, if \(f:\mathbb{N}\to\mathbb{Z}\) is an unbounded \(k\)-regular sequence, we show that there is a constant \(c>0\) such that \(|f(n)|>c\log n\) infinitely often. We end our paper by answering a question of Borwein, Choi, and Coons on the sums of completely multiplicative automatic functions.
\end{abstract}

\subjclass[2010]{Primary 11B85; Secondary 11N56, 11B37}
\keywords{Automata sequences, regular sequences, growth of arithmetic functions}%

\maketitle

\section{Introduction}

Let $k\geqslant 2$ be a natural number and $\Sigma$ a finite set of integers. A sequence $f:\mathbb{N}\to \Sigma$ is called {\em $k$-automatic} (or just {\em automatic}), provided that there is a finite-state automaton that reads as input the base-$k$ expansion of $n$ and outputs the number $f(n)$. The canonical example of an automatic sequence is the Thue-Morse sequence $$\{t(n)\}_{n\geqslant 0}:=0110100110010110100101100110\cdots,$$ wherein $t(n)$ takes the value $1$ if the binary expansion of $n$ has an odd number of ones, and the value $0$ if the binary expansion has an even number of ones. The automaton that takes in the binary expansion of $n$ and outputs $t(n)$ is given in Figure \ref{TM}.

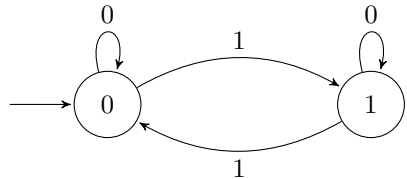
\begin{figure}[htb]
\begin{center}
\hspace{-.7cm}
\begin{tikzpicture}[scale=.77, ->,>=stealth',shorten >=1pt,auto,scale=1.5]

	\node [state,draw=none,overlay] (q) at (-.5,1) {};
	\node [state] (0) at (1,1) {$0$};
	\node [state] (1) at (4,1) {$1$};
	
	\path (q) edge [right] node {} (0);
	
	\path (0) edge [loop above] node {$0$} (0);
	\path (1) edge [loop above] node {$0$} (1);
	
	\path (0) edge [bend left] node {$1$} (1);
	\path (1) edge [bend left] node {$1$} (0);

\end{tikzpicture}
\caption{The $2$-automaton that produces the Thue-Morse sequence.}
\label{TM}
\end{center}
\end{figure}

We define the {\em $k$-kernel} of an integer-valued sequence $f:\mathbb{N}\to \mathbb{Z}$ to be the collection of subsequences $$\mathcal{K}_k(f):=\big\{\{f(k^\ell n+r)\}_{n\geqslant 0}\colon \ell\geqslant0, 0\leqslant r<k^\ell\big\}.$$ For example, the $2$-kernel of the Thue-Morse sequence is $$\mathcal{K}_2(t)=\big\{\{t(n)\}_{n\geqslant 0},\{1-t(n)\}_{n\geqslant 0}\big\}.$$ Indeed, a result of Cobham \cite{C1972} gives that $\mathcal{K}_k(f)$ is finite if and only if $\{f(n)\}_{n\geqslant 0}$ is a $k$-automatic sequence; see also \cite[Theorem 6.6.2]{AS2003}. Using this characterisation of $k$-automatic sequences, Allouche and Shallit \cite{AS1992} introduced the more general class of $k$-regular sequences. Formally, we say the integer-valued sequence $f:\mathbb{N}\to \mathbb{Z}$ is {\em $k$-regular} provided the $\mathbb{Z}$-module generated by $\mathcal{K}_k(f)$ (regarded as a submodule of the $\mathbb{Z}$-module of all integer-valued sequences) is finitely generated. Of course, if $\mathcal{K}_k(f)$ is finite, then certainly such a $\mathbb{Z}$-module is finitely generated, so a $k$-automatic sequence is necessarily $k$-regular.  Moreover, Allouche and Shallit \cite[Theorem 2.3]{AS1992} showed that a $k$-regular sequence taking only finitely many distinct values is necessarily $k$-automatic. The following question arises immediately: {\em what type of growth must an unbounded $k$-regular sequence have?}

To answer this question, in this paper we provide the following result.

\begin{theorem}\label{main} Let $k\geqslant 2$.  If $f:\mathbb{N}\to \mathbb{Z}$ is an unbounded $k$-regular sequence, then there exists $c>0$ such that $|f(n)| > c \log n$ infinitely often.
\end{theorem}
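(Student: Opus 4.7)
The plan is to use the matrix representation of $f$ and extract linear growth in the number of base-$k$ digits from an iterated matrix product. By $k$-regularity there exist $d\ge 1$, integer matrices $A_0,\dots,A_{k-1}\in M_d(\mathbb{Z})$, and integer vectors $u,v\in\mathbb Z^d$ such that
\[
f(n)=u^T A_{d_0}A_{d_1}\cdots A_{d_{s-1}}\,v\qquad\text{when }n=\textstyle\sum_{i=0}^{s-1}d_ik^i.
\]
After passing to a minimal representation, I may assume reachability (the orbit of $v$ under the semigroup $\mathcal S=\langle A_0,\dots,A_{k-1}\rangle$ spans $\mathbb Q^d$) and observability (no nonzero $w\in\mathbb Q^d$ is annihilated by every $u^T A_q$, where $A_q$ ranges over products indexed by digit words $q$).

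The central step is to produce a matrix $B\in\mathcal S$ and a vector $w$ in the orbit of $v$ so that $\{B^m w\}_{m\ge 0}$ is unbounded. Granting this, Jordan normal form combined with Kronecker's theorem---algebraic integers of modulus at most $1$ are zero or roots of unity---yields $\|B^m w\|\ge Cm$ along an arithmetic progression of $m$: either some eigenvalue of $B$ has modulus $>1$ (giving exponential growth), or the unboundedness forces a non-trivial Jordan block at a root of unity (giving polynomial growth of degree $\ge 1$ along multiples of its order). Observability, applied to the dominant generalized eigenvector, produces a bounded-length word $q$ with $|u^TA_qB^mw|\ge C'm$; reachability provides a bounded-length word $p$ with $A_pv=w$. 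The integer $n$ whose base-$k$ digits form the concatenation $q\,(b_0\cdots b_{\ell-1})^m\,p$ with $B=A_{b_0}\cdots A_{b_{\ell-1}}$ then has $O(m)$ digits and $|f(n)|\ge C'm$, so $|f(n)|\ge c\log n$ for infinitely many $n$.

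The main obstacle is the production of $B$: a semigroup of integer matrices can be unbounded even when each generator has bounded powers---for instance, finite-order matrices in $\mathrm{SL}_2(\mathbb Z)$ can combine into parabolic or hyperbolic elements---so the existence of a cycle $B^m w$ that grows is not automatic. The strategy is to suppose that every element of $\mathcal S$ has bounded powers and derive a contradiction: integrality together with Kronecker's theorem force every invertible element of $\mathcal S$ to have finite order, so a Schur-type finiteness theorem bounds the invertible part, while Fitting's lemma handles non-invertible elements by decomposing each as a nilpotent summand (eventually zero) plus a finite-order summand on a complementary subspace. Together these bound the entire semigroup action on $\mathbb Z^d$, contradicting the unboundedness of $\{V(n)\}=\mathcal Sv$, and this structural step is the technical heart of the argument.
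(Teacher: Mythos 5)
Your overall architecture coincides with the paper's: pass to a linear representation of $f$, reduce to finding an element ${\bf B}$ of the generated matrix semigroup whose powers grow at least linearly (via Kronecker's theorem --- an algebraic integer all of whose conjugates have modulus at most $1$ is zero or a root of unity --- together with a nontrivial Jordan block at a root of unity), and then sandwich the cycle between fixed reachability and observability words so that the resulting integer has $O(m)$ base-$k$ digits while $|f(n)|\geqslant C'm$. That part of your plan is sound and is essentially how the paper's proof proceeds.

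The gap is exactly where you locate the ``technical heart.'' You need: if every element of the finitely generated integer-matrix semigroup $\mathcal S$ has bounded powers (equivalently, by integrality, generates a finite cyclic subsemigroup), then $\mathcal S$ is bounded (equivalently, finite). Your proposed proof --- Schur for the invertible elements plus Fitting's lemma for the non-invertible ones --- does not close this. The Schur half is fine: under your hypothesis an invertible $A\in\mathcal S$ satisfies $A^p=I$, so the invertible elements form a torsion subgroup of ${\rm GL}_d(\mathbb{Q})$, which is finite. But Fitting's lemma only splits the space relative to a \emph{single} non-invertible element; the Fitting decompositions of distinct elements are not compatible, so nothing in your sketch controls long mixed products whose factors have shifting kernels and images (and such products are precisely how unboundedness arises from individually well-behaved generators, as your own ${\rm SL}_2(\mathbb{Z})$ example illustrates). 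Controlling these products is the content of the Burnside problem for matrix semigroups, and it genuinely requires more, e.g.\ an induction on dimension using Burnside's theorem on irreducible matrix algebras and a trace argument. The paper does not reprove this statement: it invokes the theorem of McNaughton and Zalcstein \cite{MZ1975} that a finitely generated torsion semigroup of matrices is finite, from which the existence of some ${\bf B}\in\mathcal S$ with all powers distinct is immediate. If you replace the Schur/Fitting sketch by that citation, the rest of your argument goes through, with minor care needed for padding by leading zeros and for extracting a single observability word $q$ (via a compactness/subsequence argument on the directions of ${\bf B}^m w$, or, as the paper does, by keeping a finite family of words and applying pigeonhole for each $m$).
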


\noindent In fact, we can show that there exist words $u_1,\ldots ,u_m,y,v_1,\ldots ,v_m\in\{0,1,\ldots ,k-1\}^*$ and a constant $c_0>0$ such that for all sufficiently large $n$ there exist an $i$ and $j$ such that $|f([u_i y^n v_j]_k)|\geqslant c_0 n$. Here for a word $w=i_s\cdots i_0\in \{0,1,\ldots ,k-1\}^*$, we have written $[w]_k=i_sk^s+\cdots +i_0$. This can be thought of as a type of ``pumping lemma'' for attaining unbounded growth.

Before moving on, we present examples that show that the conclusion to the statement of Theorem \ref{main} is best possible.

Let $k\geqslant 2$ and consider the sequence $s_k(n)$, which is the sum of the digits of the base-$k$ expansion of $n$.  We note that if $a\geqslant 1$ and $0\leqslant b <k^a$ then $s_k(k^a n + b)=s_k(n)+s_k(b)$ and so the $\mathbb{Z}$-module spanned by the $k$-kernel of $s_k(n)$ is generated by the sequence $s_k(n)$ and the constant sequence of ones, and so $s_k(n)$ is $k$-regular.  Since the base-$k$ expansion of $n$ has at most $\log_k(n) +1$ digits, each of which is at most $k-1$, we see that there is a positive constant $C$ such that $s_k(n)\leqslant C\log(n)$ for $n$ sufficiently large.  On the other hand, $s_k(k^n)=1$ for every $n$ and thus a general answer to the growth question can at most be an ``infinitely often'' result.   If fact, if one lets $t_k(n)$ denote the $k$-automatic sequence that is $1$ when $n=k^j-1$ for some $j\geqslant 0$ and is zero otherwise, then the product $u_k(n):=s_k(n)t_k(n)$ is $k$-regular \cite[Theorem 16.2.1]{AS2003} and we have $u_k(n)=0$ unless $n=k^j-1$, in which case it is $(k-1)j$, and so we see that the natural numbers $n$ for which $|u_k(n)|>\log n$ are precisely those $n$ of the form $n=[(k-1)^j]_k$ for some $k\geqslant 1$.  

\section{Minimal growth in unbounded regular sequences}
In this section, we prove our main result.  To do this, we need a basic dichotomy about the growth of entries of matrices in a semigroup.

\begin{lemma} Let $k\geqslant 2$ be an integer, let ${\bf A}_0,\ldots,{\bf A}_{k-1}$ be $d\times d$ integer matrices, and let $\mathcal{B}$ be the semigroup generated by ${\bf A}_0,\ldots ,{\bf A}_{k-1}$.  Then either $\mathcal{B}$ is finite or there is some ${\bf S}\in \mathcal{B}$ and fixed vectors ${\bf v}$ and ${\bf w}\in \mathbb{C}^d$ such that $|{\bf w}^T {\bf S}^n {\bf v}| \geqslant n$ for all sufficiently large $n$.
\label{lem: growthlem}
\end{lemma}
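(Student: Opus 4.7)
The plan is to reduce the dichotomy to a single statement about one matrix and then read off the conclusion from Jordan form. Concretely, I plan to show that if $\mathcal{B}$ is infinite then there exists $\mathbf{S}\in\mathcal{B}$ for which $\|\mathbf{S}^n\|$ is unbounded in $n$; from there the construction of $\mathbf{v}$ and $\mathbf{w}$ is immediate from normal form theory.

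For the Jordan-form step, suppose $\mathbf{S}\in\mathcal{B}$ has unbounded iterates. Passing to a Jordan basis over $\mathbb{C}$, one of two things happens: either (a) some eigenvalue $\lambda$ of $\mathbf{S}$ has $|\lambda|>1$, or (b) some eigenvalue $\lambda$ with $|\lambda|=1$ is attached to a Jordan block of size at least two. In case (a), choose $\mathbf{v}$ a $\lambda$-eigenvector and $\mathbf{w}$ the corresponding dual-basis covector; then $\mathbf{w}^T\mathbf{S}^n\mathbf{v}=\lambda^n$, which has absolute value $|\lambda|^n\geqslant n$ for all large $n$. In case (b), pick $\mathbf{v}$ and $\mathbf{w}$ dual to the two basis vectors of a $2\times 2$ Jordan sub-block for $\lambda$; the relevant off-diagonal entry of $\mathbf{S}^n$ is $n\lambda^{n-1}$, so $|\mathbf{w}^T\mathbf{S}^n\mathbf{v}|=n$ exactly.

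For producing $\mathbf{S}$, my plan is to consider the joint spectral radius $\rho:=\lim_{n\to\infty}\max_{|w|=n}\|\mathbf{A}_w\|^{1/n}$, where $w$ ranges over words in $\{0,\ldots,k-1\}^*$ and $\mathbf{A}_w$ is the corresponding product. If $\rho<1$ then every sufficiently long product has norm strictly less than one and, being an integer matrix, is zero; hence $\mathcal{B}$ is finite, contradicting our hypothesis. If $\rho>1$ then the Berger--Wang formula delivers a word $w$ with $\rho(\mathbf{A}_w)>1$, and $\mathbf{S}=\mathbf{A}_w$ does the job via case (a) above.

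The main obstacle, as usual in this kind of argument, is the boundary case $\rho=1$. Here I must rule out the possibility that $\mathcal{B}$ is infinite yet every $\mathbf{S}\in\mathcal{B}$ has bounded orbit. The key observation is that $\mathbf{S}^n$ bounded and integer-valued forces $\mathbf{S}^a=\mathbf{S}^{a+p}$ for some $a,p\geqslant1$, so the minimal polynomial of $\mathbf{S}$ divides $x^a(x^p-1)$; thus every $\mathbf{S}\in\mathcal{B}$ is, modulo its nilpotent part, of finite multiplicative order. A Burnside/Schur-type finiteness theorem --- a finitely generated group of matrices over $\mathbb{Q}$ in which every element has finite order is itself finite --- applied after carefully separating off the common nilpotent part of $\mathcal{B}$ should then force $\mathcal{B}$ to be finite, once more contradicting our assumption. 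Putting the three cases together produces the required $\mathbf{S}$ and finishes the lemma.
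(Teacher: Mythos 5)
Your Jordan-form step is sound and is essentially the paper's: unbounded powers of an integer matrix force either an eigenvalue of modulus greater than $1$ (exponential growth against an eigenvector) or a nontrivial Jordan block attached to a unimodular eigenvalue (the superdiagonal entry $n\lambda^{n-1}$ gives linear growth), and both cases produce the required $\mathbf{v}$ and $\mathbf{w}$. Your $\rho<1$ and $\rho>1$ cases are also fine: sufficiently long products of norm less than $1$ vanish for integer matrices, and Berger--Wang does produce a single product with spectral radius exceeding $1$.

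The genuine gap is the boundary case $\rho=1$, and it is exactly where the real content of the lemma lives. You correctly reduce to: if every $\mathbf{S}\in\mathcal{B}$ has bounded (hence eventually periodic) powers, then $\mathcal{B}$ is finite. But this statement \emph{is} the McNaughton--Zalcstein theorem (the Burnside problem for finitely generated linear semigroups), which the paper invokes as a black box precisely at this point. Your proposed route to it --- ``separating off the common nilpotent part'' and then applying Schur's theorem on torsion linear groups --- does not work as described: the elements of $\mathcal{B}$ need not be simultaneously triangularizable, so there is no common nilpotent part to quotient by; the semisimple parts of the elements do not themselves form a semigroup; and $\mathcal{B}$ is a semigroup rather than a group, so Schur's theorem does not apply directly. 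The reduction from torsion linear semigroups to torsion linear groups is a nontrivial piece of semigroup structure theory, not a routine normal-form manipulation. If you instead cite McNaughton--Zalcstein outright, your proof closes up --- but then the entire joint-spectral-radius apparatus becomes unnecessary, since (as in the paper) that theorem applied to the infinite finitely generated $\mathcal{B}$ immediately yields an $\mathbf{S}$ whose powers are all distinct, hence unbounded, because a bounded set of integer matrices is finite.
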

\begin{proof}
Suppose that $\mathcal{B}$ is infinite.  Then since $\mathcal{B}$ is finitely generated, a result of McNaughton and Zalcstein \cite{MZ1975} gives that there is some ${\bf S}$ in $\mathcal{B}$ such that the matrices ${\bf S},{\bf S}^2,{\bf S}^3, \ldots $ are all distinct.  Let $p(x)$ be the characteristic polynomial of ${\bf S}$.  Then $p(x)$ is a monic integer polynomial.  If $p(x)$ has a root $\lambda$ that is strictly greater than $1$ in modulus, then ${\bf S}$ has an eigenvector ${\bf v}$ such that ${\bf S}{\bf v}=\lambda {\bf v}$.  Pick a nonzero vector ${\bf w}$ such that ${\bf w}^T {\bf v}=C\neq 0$.  Then $|{\bf w}^T {\bf S}^n {\bf v}| = |C|\cdot  |\lambda|^n \geqslant n$ for $n$ sufficiently large.  

If, on the other hand, all the roots of $p(x)$ are at most $1$ in modulus, then all non-zero eigenvalues of ${\bf S}$ are  algebraic integers with all conjugates having modulus 1, hence they are roots of unity. Let ${\bf Y}$ be a matrix in ${\rm GL}_d(\mathbb{C})$ such that ${\bf T}:={\bf Y}^{-1}{\bf S}{\bf Y}$ is in Jordan form, where we take Jordan blocks to be upper-triangular.  Then each Jordan block in ${\bf T}$ is of the form ${\bf J}_i(\lambda)$ with $\lambda$ either zero or a root of unity and $i\geqslant 1$.  Since ${\bf S}$ does not generate a finite subsemigroup of $\mathcal{B}$, there is some root of unity $\omega$ and some $m>1$ such that ${\bf T}$ has a block of the form ${\bf J}_m(\omega)$.  We may assume, without loss of generality, that ${\bf J}_m(\omega)$ is the first block occurring in ${\bf T}$.  Then the $(1,2)$-entry of
${\bf T}^n$ is $n\omega^{n-1}$ and so $|e_1^T {\bf T}^n e_2|=n$ for every $n$.  In particular, we have
$$|e_1^T{\bf Y}^{-1}{\bf S}^n{\bf Y}e_2|\geqslant n$$ for every $n$.  Taking ${\bf w}^T=e_1^T{\bf Y}^{-1}$ and ${\bf v}={\bf Y}e_2$ gives the result.
\end{proof}
We are now ready to prove Theorem \ref{main}. 

\begin{proof}[Proof of Theorem \ref{main}] Let $k\geqslant 2$ be an integer, and suppose that $f:\mathbb{N}\to \mathbb{Z}$ is an unbounded $k$-regular sequence.  Given a word $w=i_s\cdots i_0\in \{0,\ldots ,k-1\}^*$, as stated previously, we let $[w]_k$ denote the natural number $n=i_s k^s+\cdots +i_1 k+i_0$.  We have the $\mathbb{Z}$-submodule of all $\mathbb{Z}$-valued sequences spanned by $\mathcal{K}_k(f)$ is a finitely generated torsion free module and hence free of finite rank.  Let $\big\{\{g_1(n)\}_{n\geqslant 0},\ldots ,\{g_d(n)\}_{n\geqslant 0}\big\}$ be a $\mathbb{Z}$-module basis for the $\mathbb{Z}$-module spanned by $\mathcal{K}_k(f)$.  Then for each $i\in \{0,1,\ldots ,k-1\}$, the functions $g_1(kn+i),\ldots ,g_d(kn+i)$ can be expressed as $\mathbb{Z}$-linear combinations of $g_1(n),\ldots ,g_d(n)$ and hence there are $d\times d$ integer matrices ${\bf A}_0,\ldots ,{\bf A}_{k-1}$ such that 
$$ [g_1(n),\ldots ,g_d(n)]{\bf A}_i =[g_1(kn+i),\ldots ,g_d(kn+i)]$$ for $i=0,\ldots ,k-1$ and all $n\geqslant 0$.  In particular, if $i_s\cdots i_0$ is the base-$k$ expansion of $n$, then $ [g_1(0),\ldots ,g_d(0)]{\bf A}_{i_s}\cdots {\bf A}_{i_0} = [g_1(n),\ldots ,g_d(n)]$.   (We note that this holds even if we pad the base-$k$ expansion of $n$ with zeros at the beginning.)  We claim that the $\mathbb{Q}$-span of the vectors $[g_1(i),\ldots ,g_d(i)]^T$, as $i$ ranges over all natural numbers, must span all of $\mathbb{Q}^d$.  Indeed, if this were not the case, then their span would be a proper subspace of $\mathbb{Q}^d$ and hence the span would have a non-trivial orthogonal complement.  In particular, there would exist integers $c_1,\ldots ,c_d$, not all zero, such that $$c_1g_1(n)+\cdots +c_dg_d(n)=0$$ for every $n$, contradicting the fact that $g_1(n),\ldots ,g_d(n)$ are linearly independent sequences. 

Let $\mathcal{A}$ denote the semigroup generated by ${\bf A}_0,\ldots ,{\bf A}_{k-1}$.  Then we have just shown that there exist words ${\bf X}_1,\ldots ,{\bf X}_d$ in $\mathcal{A}$ such that $$  [g_1(0),\ldots ,g_d(0)]{\bf X}_1 ,\ldots , [g_1(0),\ldots ,g_d(0)]{\bf X}_d$$ span $\mathbb{Q}^d$.  
Now, if $\mathcal{A}$ is finite, then $\{g_1(n)\}_{n\geqslant 0},\ldots ,\{g_d(n)\}_{n\geqslant 0}$ take only finitely many distinct values.  Since $\{f(n)\}_{n\geqslant 0}$ is a $\mathbb{Z}$-linear combination of $\{g_1(n)\}_{n\geq 0},\ldots ,\{g_d(n)\}_{n\geqslant 0}$, we see that it too takes only finitely many distinct values, which contradicts our assumption that it is unbounded.  Thus $\mathcal{A}$ must be infinite.  By Lemma \ref{lem: growthlem}, there exist ${\bf Y}\in \mathcal{A}$ and vectors ${\bf x},{\bf y}\in \mathbb{C}^d$ such that $|{\bf x}^T {\bf Y}^n {\bf y}|\geqslant n$ for all $n$ sufficiently large.

By construction, we may write ${\bf x}^T=\sum_j \alpha_j {\bf X}_j [g_1(0),\ldots ,g_d(0)]$ for some complex numbers $\alpha_j$.  
Then 
$${\bf x}^T {\bf Y}^n = \sum_{j} \alpha_j [g_1(0),\ldots ,g_d(0)]{\bf X}_j {\bf Y}^n .$$     Let $u_j$ be the word in $\{0,1,\dots ,k-1\}^*$ corresponding to ${\bf X}_j$ and let $y$ be the word in $\{0,\ldots ,k-1\}^*$ corresponding to ${\bf Y}$; that is $u_j=i_s\cdots i_0$ where ${\bf X}_j = {\bf A}_{i_s}\cdots {\bf A}_{i_0}$ and similarly for $y$. Then we have $$[g_1(0),\ldots ,g_d(0)]{\bf X}_j{\bf Y}^n=[g_1([u_j y^n]_k),\ldots ,g_d([u_j y^n]_k)]^T.$$

Write ${\bf y}^T=[\beta_1,\ldots ,\beta_d]$.  Then $${\bf x}^T {\bf Y}^n {\bf y} =\sum_{i,j} \alpha_i\beta_j g_j([u_i y^n]_k).$$
By assumption, each of $\{g_1(n)\}_{n\geqslant 0},\ldots ,\{g_d(n)\}_{n\geqslant 0}$ is in the $\mathbb{Z}$-module generated $\mathcal{K}_k(f)$, and hence there exist natural numbers $p_1,\ldots ,p_t$ and $q_1,\ldots ,q_t$ with $0\leqslant q_m <k^{p_m}$ for $m=1,\ldots ,t$ such that each of for $s=1,\ldots ,d$ we have
$g_s(n)= \sum_{i=1}^t  \gamma_{i,s} f(k^{p_i} n + q_i)$ for some integers $\gamma_{i,s}$.  
Then 
$${\bf x}^T{\bf Y}^n {\bf y} = \sum_{i,j,\ell} \alpha_i \beta_j \gamma_{\ell,j} f([u_i y^n v_{\ell}]_k),$$ where $v_\ell$ is the unique word in $\{0,1,\ldots ,k-1\}^*$ of length $p_\ell$ such that $[v_\ell]_k=q_\ell$.  Let $K=\sum_{i,j,\ell} |\alpha_i|\cdot |\beta_j|\cdot |\gamma_{\ell,j}|$.  Then since
$|{\bf x}^T {\bf Y}^n {\bf y}|\geqslant n$ for all $n$ sufficiently large, there is some $N_0>0$ such that for $n>N_0$ some element from
$$\big\{ \{|f([u_i y^n v_j]_k)|\}_{n\geqslant 0} \colon i=1,\ldots, d, j=1,\ldots ,t\}\big\}$$ is at least $n/K$.  

We let $M$ denote the maximum of the lengths of $u_1,\ldots ,u_d,y,v_1,\ldots ,v_t$.
Then each of $[u_i y^n v_j]_k < k^{2Mn}$ for $n\geqslant 2$.   Hence we have constructed an infinite set of natural numbers $N=N_n:=[u_i y^n v_j]_k$ such that 
$|f(N)|>\log_k(N)/2K$ and so taking $c=(2MK\log\, k)^{-1}$, we see that $|f(N)|>c\log N$ for infinitely many $N$.
\end{proof}

\section{Discrepancy of completely multiplicative automatic functions}

Let us take this opportunity to highlight one of our favourite open problems, the Erd\H{o}s discrepancy problem, which may or may not be related to the results above (only time will tell).

Erd\H{o}s \cite{E1957} asked the following question: {\em ``Let $f(n)=\pm 1$ be an arbitrary number theoretic function. Is it true that to every $c$ there is a $d$ and an $n$ for which \begin{equation}\label{E8}\left|\sum_{j=1}^n f(jd)\right|>c?\end{equation} Inequality \eqref{E8} is one of my oldest conjectures.''} (This particular quote is taken from a restatement of the conjecture in \cite[p.~78]{E1985}. See also \cite{E1985b} and \cite{EG}.) Erd\H{o}s offers 500 dollars for a proof of this conjecture. Erd\H{o}s \cite[p.~293]{E1957} wrote in 1957 that this conjecture is twenty-five years old, placing its origin at least as far back as the early 1930s. Erd\H{o}s \cite{E1957,E1985,E1985b} also states the multiplicative form of his conjecture. 

\begin{conjecture}[Erd\H{o}s]\label{Econj} Let $f(n)=\pm 1$ be a multiplicative function, i.e., $f(ab)=f(a)f(b)$, when $(a,b)=1$. Then  \begin{equation}\label{E9}\limsup\left|\sum_{j=1}^n f(j)\right|=\infty.\end{equation}\end{conjecture}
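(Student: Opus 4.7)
The plan is to argue by contradiction: assume $S(N) := \sum_{j=1}^N f(j)$ satisfies $|S(N)| \leq C$ for all $N$, and attempt to derive structural constraints on $f$ that are incompatible with it being both $\pm 1$-valued and multiplicative. The essential input is analytic. Partial summation converts the boundedness of $S(N)$ into strong control on the Dirichlet series $F(s) = \sum_{n\geq 1} f(n)n^{-s} = s\int_1^\infty S(x) x^{-s-1}\,dx$, which extends to a bounded holomorphic function on every half-plane $\operatorname{Re}(s) > \varepsilon$. Combined with the Euler product $F(s) = \prod_p(1 + f(p)p^{-s} + f(p^2)p^{-2s} + \cdots)$, this already forces strong convergence properties on sums of $f$ over primes, in particular ruling out the "Liouville-type" case where $f(p) = -1$ for a positive density of primes.

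Next I would invoke the pretentious theory of multiplicative functions of Granville and Soundararajan. Halász's mean value theorem, applied to the logarithmic mean $\tfrac{1}{\log N}\sum_{n \leq N} f(n)/n$, implies that if $S(N)$ stays bounded then $f$ must \emph{pretend} to be a twist $\chi(n)\, n^{it}$ of a primitive Dirichlet character, in the sense that the pretentious distance
\[
\mathbb{D}(f,\, \chi \cdot n^{it};\, x)^2 \;=\; \sum_{p \leq x} \frac{1 - \operatorname{Re}\bigl(f(p)\overline{\chi(p)}\, p^{-it}\bigr)}{p}
\]
remains bounded as $x \to \infty$. Since $f$ is real-valued, a symmetry (conjugation) argument eliminates $t \neq 0$ and forces $\chi$ to be a real quadratic character modulo some $q$. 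Thus $f$ agrees with $\chi$ at almost all primes, and the only remaining freedom lies at the finitely many ramified primes $p \mid q$, where $\chi(p) = 0$ but $f(p) = \pm 1$, together with the prime-power values $f(p^k)$ for $k \geq 2$, each of which is an independent choice in $\{\pm 1\}$.

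The main obstacle is ruling out this residual pretentious case. Real characters themselves have bounded partial sums by orthogonality, so the desired contradiction must exploit the fact that $f$ is never zero: any $\pm 1$-assignment at the ramified primes and at the higher prime powers introduces a small but systematic drift in the partial sums that should force $|S(N)| \to \infty$, yet converting this intuition into a quantitative bound is exactly where the problem becomes hard. The approach I would take mirrors Tao's resolution of the Erd\H{o}s discrepancy problem: analyse the logarithmic two-point correlations $\tfrac{1}{\log N}\sum_{n \leq N} f(n)f(n+h)/n$ via the Matom\"aki--Radziwi\l{}\l{} theorem on multiplicative functions in short intervals, localise the pretentious approximation, and then run an entropy decrement argument to convert bounded discrepancy into nontrivial statistical information about sign patterns of $f$ along short arithmetic progressions. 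The incompatibility of this statistical information with the pretentious structure derived in the previous step yields the contradiction.

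This final correlation-and-entropy step is by far the hardest part of the plan; everything before it is comparatively soft analytic number theory that has been well understood since the work of Hal\'asz and Wirsing. I do not expect any genuine shortcut past a Matom\"aki--Radziwi\l{}\l{}/entropy-decrement style input, and the technical heart of the proof will be quantifying exactly how the absence of zero values in $f$ propagates from local prime-power choices to a divergent drift in $S(N)$.
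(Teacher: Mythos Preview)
The statement you are attempting to prove is labeled \emph{Conjecture} in the paper, and the paper does not prove it. It is quoted as an open problem attributed to Erd\H{o}s (and Tchudakoff), used only as motivation; the paper's actual results concern the much narrower class of completely multiplicative \emph{automatic} $\pm 1$-functions, where the partial sums are $k$-regular and Theorem~\ref{main} applies directly together with the explicit structure furnished by Propositions~\ref{SPf} and~\ref{q}. There is therefore no proof in the paper to compare your proposal against.

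Your proposal is not a proof but a strategic outline, and you acknowledge as much. The outline is broadly the shape of Tao's 2015 resolution of the Erd\H{o}s discrepancy problem (reduce to a pretentious/character-like case via Hal\'asz-type input, then handle the residual case with Matom\"aki--Radziwi\l\l\ short-interval results and an entropy decrement argument). Two cautions are in order. First, bounded partial sums $|S(N)|\le C$ is a far stronger hypothesis than mean value zero, and Hal\'asz's theorem by itself does not yield the pretentiousness conclusion you state; one needs the finer logarithmic-average machinery and the correlation estimates you allude to later, so the clean separation into ``soft analytic number theory'' followed by a hard endgame is somewhat misleading. Second, the conjecture as stated is for \emph{multiplicative} (not completely multiplicative) $f$, so the reduction step in Tao's argument---passing to a completely multiplicative function without losing control of discrepancy---is an additional ingredient you would need to address. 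None of this is relevant to the paper under review, which makes no claim to settle the conjecture.
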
 

\noindent Erd\H{o}s added in \cite{E1985} that {\em ``clearly \eqref{E9} would follow from \eqref{E8} but as far as I know \eqref{E9} has never been proved. Incidentally \eqref{E9} was also conjectured by Tchudakoff.''}

Erd\H{o}s \cite{E1985} suggests a strengthening of his conjecture: {\em ``is it true that the density of integers $n$ for which $\left|\sum_{j=1}^n f(j)\right|<c$ is $0$ for every $c$?''}

Concerning true lower bounds to $\sum_{n\leqslant x}f(n)$, Erd\H{o}s and Graham \cite{EG} ask, is it true that there is a $c>0$ so that $$\max_{N\leqslant x}\left|\sum_{n\leqslant N} f(n)\right|>c\log x?$$ 

We note that many of the multiplicative functions defined in terms of Dirichlet characters are automatic and since the sequence of partial sums of an automatic sequence is regular, Theorem \ref{main} shows that this lower bound must hold for such multiplicative functions as long as their partial sums are unbounded.  As an example, consider the completely multiplicative function $\lambda_3(n)$ defined by $\lambda_3(1)=1$ and $$\lambda_3(p)=\begin{cases}~\,\, 1 & \mbox{if $p\equiv 1\ (\bmod\ 3)$}\\ -1 & \mbox{if $p\equiv -1\ (\bmod\ 3)$}\\ ~\,\, 1 & \mbox{if $p=3$.}\\ \end{cases}$$ for all primes $p$. To see that $\{\lambda_3(n)\}_{n\geqslant 0}$ is $3$-automatic one can either show that $\lambda_3(3n+1)=1$, $\lambda_3(3n+2)=-1$ and $\lambda_3(3n)=\lambda_3(n)$ and use Cobham's result characterizing $k$-automaticity in terms of finiteness of the $k$-kernel, or one can see Figure \ref{g3}.

\begin{figure}[htb]
\begin{center}
\hspace{-.7cm}
\begin{tikzpicture}[scale=.77, ->,>=stealth',shorten >=1pt,auto,scale=1.5]

	\node [state,draw=none,overlay] (q) at (-.5,1) {};
	\node [state] (0) at (1,1) {$1$};
	\node [state] (1) at (4,1) {$-1$};
	
	\path (q) edge [right] node {} (0);
	
	\path (0) edge [loop above] node {$0$} (0);
	\path (1) edge [loop above] node {$0$} (1);
	\path (0) edge [loop below] node {$1$} (0);
	\path (1) edge [loop below] node {$2$} (1);
		
	\path (0) edge [bend left] node {$2$} (1);
	\path (1) edge [bend left] node {$1$} (0);

\end{tikzpicture}
\caption{The $3$-automaton that produces the sequence $\lambda_3(n)$.}
\label{g3}
\end{center}
\end{figure}
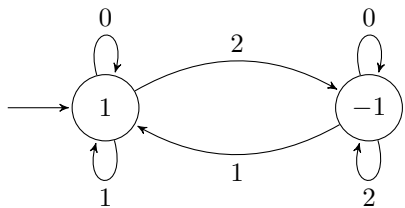

We have the somewhat surprising result that $s_{1,3}(n)=\sum_{j=1}^n \lambda_3(j)$, where $s_{1,3}(n)$ is the number of ones in the ternary expansion of $n$.  To see this, let $F(n)=\sum_{j=0}^n \lambda_3(j)$, where we take $\lambda_3(0)=0$.  Then we have
$$F(3n)=\sum_{j=0}^n \lambda_3(3j)+\sum_{j=0}^n \lambda_3(3j+1)+\sum_{j=0}^n \lambda_3(3j+2)=F(n),$$ 
$$F(3n+1)=\sum_{j=0}^n \lambda_3(3j)+\sum_{j=0}^n \lambda_3(3j+1)+\sum_{j=0}^{n-1} \lambda_3(3j+2)=F(n)+1,$$ and
$$ F(3n+2)=\sum_{j=0}^n \lambda_3(3j)+\sum_{j=0}^n \lambda_3(3j+1)+\sum_{j=0}^{n-1} \lambda_3(3j+2)=F(n).$$  On the other hand, it is clear that 
$$s_{1,3}(3n)=s_{1,3}(3n+2)=s_{1,3}(n)$$ and $s_{1,3}(3n+1)=s_{1,3}(n)+1$.   Since $s_{1,3}(0)=F(0)$ and they satisfy the same recurrences given above, we see that $F(n)=s_{1,3}(n)$ for every $n$.   In particular, $F((3^n-1)/2)=n$ for every $n$, and so in view of the function $\lambda_3(n)$ and its sequence of partial sums, $F(n)$, the lower bounds Erd\H{o}s and Graham \cite{EG} speculated might hold would indeed be ``best possible.''

Functions such as $\lambda_3(n)$ are defined via Dirichlet characters and as well are automatic. As stated previously, their partial sums are regular, and thus our theorem applies to them. Understanding and relating multiplicative functions to character-like functions, as a means to attack the Erd\H{o}s discrepancy problem, has been described by Gowers \cite{Gedp}. Specifically, Gowers \cite{Gedp} puts forward the following strategy for a possible proof: 
\begin{quote}
{\em
\begin{enumerate}
\item[{(a)}] Develop a notion of ``almost character-like'' functions.
\item[{(b)}] Prove that a multiplicative sequence that is not almost character-like must have unbounded discrepancy.
\item[{(c)}] Prove that a multiplicative sequence that is almost character-like must have unbounded discrepancy.
\end{enumerate}
}
\end{quote}
\noindent He also adds, {\em ``I would expect {\rm (c)} to be easy once one had a decent answer to {\rm (a)}. The real challenge is to answer {\rm (a)} in a way that allows one to prove a good theorem that answers {\rm (b)}.''} 

Schlage-Puchta \cite[Proposition 1]{SP2011} has given the following characterisation of nonvanishing $k$-automatic completely multiplicative functions, which, with a bit of work, allows one to show that the question of Erd\H{o}s and Graham \cite{EG} holds for completely multiplicative automatic functions.

\begin{proposition}[Schlage-Puchta \cite{SP2011}]\label{SPf} Let $k\geqslant 2$ be an integer, and $f$ be a completely multiplicative $k$-automatic function, which does not vanish. Then there exists and integer $r$, such that if $n_1,n_2,\ell$ are integers such that $\gcd(n_1,k^{\ell+1})$ divides $k^\ell$, and $n_1\equiv n_2\  (\bmod\ k^{r+\ell}),$ then $f(n_1)=f(n_2).$
\end{proposition}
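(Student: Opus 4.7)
My plan is to reduce the proposition to a periodicity statement for $f$ on integers coprime to $k$, and then establish that periodicity using finiteness of the $k$-kernel and complete multiplicativity.

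Since $f$ is $k$-automatic, Cobham's theorem forces $f$ to take only finitely many distinct values; combined with complete multiplicativity and non-vanishing this yields $f(1)=1$ and $\{f(n):n\ge 1\}\subseteq\mu_N$ for some $N$, because a finite multiplicatively closed subset of $\mathbb{C}^*$ is a group of roots of unity. The main reduction is to the claim $(\star)$: there exists $r\ge 1$ such that $f(m)=f(m')$ whenever $m,m'$ are positive integers coprime to $k$ with $m\equiv m'\pmod{k^r}$. Granting $(\star)$, given $n_1$ with $\gcd(n_1,k^{\ell+1})\mid k^\ell$ one has $v_p(n_1)\le\ell\,v_p(k)$ for each prime $p\mid k$, so $d:=\prod_{p\mid k}p^{v_p(n_1)}$ divides $k^\ell$ and $n_1=d\,m$ with $\gcd(m,k)=1$. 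If $n_2\equiv n_1\pmod{k^{r+\ell}}$ and $r\ge 1$, then for every $p\mid k$ we have $v_p(n_2-n_1)\ge(r+\ell)v_p(k)>v_p(n_1)$, hence $v_p(n_2)=v_p(n_1)$ and $n_2=d\,m'$ with $\gcd(m',k)=1$ and $m\equiv m'\pmod{k^r}$; complete multiplicativity gives $f(n_i)=f(d)f(m_i)$, so $(\star)$ yields $f(n_1)=f(n_2)$.

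To prove $(\star)$, I would further reduce to producing $r$ with $f(m)=1$ for every $m\equiv 1\pmod{k^r}$ coprime to $k$ (given any $m\equiv m'\pmod{k^r}$ both coprime to $k$, pick a positive integer $s$ coprime to $k$ with $sm\equiv 1\pmod{k^r}$, so $sm'\equiv 1\pmod{k^r}$ as well, and cancel $f(s)\in\mu_N$ in $f(s)f(m)=f(sm)=1=f(sm')=f(s)f(m')$). Writing $m=k^rn+1$, this becomes: produce $r$ so that $\kappa_r(n):=f(k^rn+1)$ is identically $1$. The identity $(k^rn+1)(k^rt+1)=k^r(n+t+k^rnt)+1$ combined with complete multiplicativity gives $\kappa_r(n+t+k^rnt)=\kappa_r(n)\kappa_r(t)$, so the image of $\kappa_r$ is a submonoid of $\mu_N$, hence a subgroup $G_r\le\mu_N$. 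Since $\kappa_{r+1}(n)=\kappa_r(kn)$, the chain $G_1\supseteq G_2\supseteq\cdots$ stabilises at some $G\le\mu_N$, and the remaining task is to show $G=\{1\}$.

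The main obstacle is ruling out a nontrivial stable $G$. My intended argument exploits the rigidity of the monoid homomorphism $\kappa_r\colon(\mathbb{N},\star)\to\mu_N$ (with $n\star t:=n+t+k^rnt$) together with pigeonhole applied to $\mathcal{K}_k(f)$: finiteness of the kernel yields $r_1<r_2$ with $\kappa_{r_1}=\kappa_{r_2}$, and hence $\kappa_{r_1}(k^{r_2-r_1}n)=\kappa_{r_1}(n)$ for all $n$. Combining this relation with the functional equation for $\kappa_{r_1}$ and complete multiplicativity applied to products $a(k^{r_1}n+1)$ as $a$ ranges over positive integers coprime to $k$ should force $G=\{1\}$, because any nontrivial value would propagate through infinitely many kernel elements, contradicting $|\mathcal{K}_k(f)|<\infty$. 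Carrying out this last step rigorously and extracting the explicit $r$ (which will depend only on $|\mathcal{K}_k(f)|$ and $N$) is the technical heart of the proof.
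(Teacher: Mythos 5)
First, note that the paper does not prove this proposition at all: it is quoted verbatim from Schlage-Puchta \cite{SP2011} and used as a black box, so there is no in-paper argument to compare yours against; your attempt has to be judged as a proof on its own terms. Your preliminary reductions are correct and cleanly done: the values of $f$ form a finite group of roots of unity; the $p$-adic valuation argument correctly reduces the proposition to the claim $(\star)$ that $f$ is periodic modulo $k^r$ on integers coprime to $k$; the trick of multiplying by an inverse $s$ modulo $k^r$ correctly reduces $(\star)$ to showing $f\equiv 1$ on $1+k^r\mathbb{Z}_{>0}$; and the identity $(k^rn+1)(k^rt+1)=k^r(n+t+k^rnt)+1$ does make the image $G_r$ of $\kappa_r$ a subgroup of $\mu_N$ with $G_1\supseteq G_2\supseteq\cdots$ stabilising at some $G$.

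The genuine gap is the last step, which you yourself flag as ``the technical heart'': you never prove $G=\{1\}$, only assert that pigeonhole on the kernel ``should force'' it. This is not a routine verification that can be waved through, because everything you have rigorously used up to that point --- complete multiplicativity, non-vanishing, and finitely many values --- is satisfied by the Liouville function, for which the conclusion is false. So the finiteness of the $k$-kernel (as opposed to mere finiteness of the value set) must enter in an essential way precisely here, and the two relations you propose to combine, namely $\kappa_r(n+t+k^rnt)=\kappa_r(n)\kappa_r(t)$ and $\kappa_{r_1}(k^{r_2-r_1}n)=\kappa_{r_1}(n)$, do not obviously exclude a nontrivial monoid homomorphism on $1+k^r\mathbb{Z}_{>0}$ invariant under multiplication of $n$ by $k^{r_2-r_1}$; one expects to need the other kernel sequences $\{f(k^an+b)\}_n$ with $b\neq 1$ and further arithmetic input. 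Until that step is carried out, the proposition is not proved.
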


Proposition \ref{SPf} immediately implies that a nonvanishing completely multiplicative $k$-automatic function $f$ agrees with a Dirichlet character $\chi$ modulo $k^\ell$ on a all values coprime to $k$; that is, $f(n)=\chi(n)$ for all $n$ with $\gcd(n,k)=1$. In fact, one can say a bit more about these functions. 

\begin{proposition}\label{q} Let $k\geqslant 2$ be an integer, and $f$ be a completely multiplicative $k$-automatic function, which does not vanish. Then $k=q^m$ for some prime $q$ and some integer $m\geqslant 1$.
\end{proposition}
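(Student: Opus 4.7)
My plan is to argue by contradiction. Suppose that $k$ has at least two distinct prime factors, so that we can write $k=ab$ with $\gcd(a,b)=1$ and $a,b\geq 2$; the goal is to derive that $f$ must be identically $1$, contradicting the non-triviality implicit in the hypothesis (the constant function is $k$-automatic for every $k$, so Proposition~\ref{q} must be read as applying to non-constant $f$).

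First I would invoke Proposition~\ref{SPf} to obtain an integer $r\geq 1$ and a Dirichlet character $\chi$ modulo $k^r$ with $f(n)=\chi(n)$ for every $n$ coprime to $k$; boundedness of $f$ as a $k$-automatic sequence forces each $f(p)$ at a prime $p$ to be a root of unity. By the Chinese remainder theorem, $\chi=\chi_a\chi_b$, where $\chi_a$ is a character modulo $a^r$ and $\chi_b$ is a character modulo $b^r$. Using complete multiplicativity, I factor $f=f_a\cdot f_b$, where $f_a$ is the unique completely multiplicative function satisfying $f_a(p)=f(p)$ for $p\mid a$, $f_a(p)=\chi_a(p)$ for $p\nmid k$, and $f_a(p)=1$ for $p\mid b$, and where $f_b$ is defined symmetrically; checking equality on primes, $f=f_af_b$ on all of $\mathbb{N}$.

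The heart of the argument is an automaticity descent: show that $f_a$ is $a$-automatic and $f_b$ is $b$-automatic. The plan is to extract a finite $a$-kernel for $f_a$ from the finite $k$-kernel of $f$ as follows. Given an $a$-kernel position $a^\ell m+s$ of $f_a$, reindex $m=b^\ell m'+s'$ to rewrite the argument as $k^\ell m'+(a^\ell s'+s)$, which is a $k$-kernel position of $f$; by Proposition~\ref{SPf}, the $f_b$-contribution along such positions depends only on the $b$-adic part together with a residue modulo $b^r$, and using $\gcd(a,b)=1$ this contribution can be isolated and divided out uniformly in $m'$, leaving only finitely many distinct $a$-kernel sequences for $f_a$. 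Once descent is in place, each of $f_a$ and $f_b$ is a completely multiplicative, non-vanishing function automatic in a prime-power-friendly base, and a supplementary argument (promoting one factor to be automatic in a second base multiplicatively independent from the first, using the $k$-automaticity of the full product $f$ to control the cross-factor) puts one in position to apply Cobham's theorem to the multiplicatively independent pair $(a,b)$. Cobham then forces $f_a$ to be eventually periodic, and an eventually periodic, non-vanishing, completely multiplicative function is identically $1$: Dirichlet's theorem on primes in arithmetic progressions gives $f_a(p)=1$ for primes coprime to the period, and the identity $f_a(p)^2=f_a(p^2)=f_a(p^2+T)=f_a(p)\cdot 1$ handles primes $p$ dividing the period $T$. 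Symmetrically $f_b\equiv 1$, so $f\equiv 1$, the desired contradiction.

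I expect the main obstacle to be the automaticity descent from $f$ to the multiplicative factors $f_a$ and $f_b$: while the CRT decomposition $\chi=\chi_a\chi_b$ is clean at the character level, transferring it rigorously to the level of automata requires careful combinatorial bookkeeping of how base-$k$ digits of an integer carry the intermingled base-$a$ and base-$b$ information, and relies essentially on the coprimality $\gcd(a,b)=1$. A secondary difficulty is establishing the second automaticity base needed to invoke Cobham's theorem; once both are in hand the collapse to $f\equiv 1$ is standard.
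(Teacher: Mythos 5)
Your high-level strategy --- get one completely multiplicative automatic function recognized in two multiplicatively independent bases and then invoke Cobham's theorem --- is the right one, and it is the paper's strategy too. But your route to it has a genuine gap at exactly the step you dismiss as ``a secondary difficulty.'' After the CRT factorization $f=f_af_b$, your descent (even granting it) only gives $f_a$ automatic in base $a$ and $f_b$ automatic in base $b$. Cobham's theorem applies to a \emph{single} sequence automatic in two multiplicatively independent bases, so you still need $f_a$ (say) to be automatic in a second base independent of $a$, and your ``supplementary argument'' for this is left entirely unspecified; nothing in the CRT decomposition produces it. The descent itself is also only a sketch: the claim that the $f_b$-contribution along the reindexed positions ``can be isolated and divided out uniformly'' is precisely the kind of digit-bookkeeping assertion that needs proof, and since $a$ and $b$ need not be prime powers it is not even clear that the $a$-part of $n$ is readable from base-$a$ digits in the way the argument tacitly assumes. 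Finally, Proposition~\ref{SPf} is not needed at all here.

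The paper's proof avoids the factorization entirely and is essentially two lines: with $k=q_1^{j_1}\cdots q_s^{j_s}$, $s>1$, and $l=k/q_1$, complete multiplicativity gives
$$f(l^a n+b)=f(q_1)^{-a}f(k^an+bq_1^a),\qquad 0\leqslant bq_1^a<k^a,$$
and since the nonvanishing automatic function $f$ takes finitely many values, $f(q_1)$ is a root of unity; hence the $l$-kernel of $f$ lies in finitely many root-of-unity multiples of the finite $k$-kernel, so $f$ \emph{itself} is $l$-automatic. As $k$ and $l$ are multiplicatively independent when $s>1$, Cobham's theorem forces $f$ to be eventually periodic, which is the excluded degenerate case (your closing observation that an eventually periodic nonvanishing completely multiplicative function is trivial is a nice addendum the paper leaves implicit, and is the one part of your writeup that genuinely supplements it). The lesson: multiplying the argument of $f$ by $q_1^a$ transports the $l$-kernel into the $k$-kernel directly, so no decomposition of $f$ or of the character is needed.
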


\begin{proof} Write $k=q_1^{j_1}\cdots q_s^{j_s}$. We will show that if $s>1$, then $f$ cannot be $k$-automatic unless it is eventually periodic. 

To this end, let $s>1$, and set $l= k/q_1$.  Then for any given $a\geqslant 1$ and $0\leqslant b<l^a$, $$\{f(l^a n + b)\}_{n\geq 0} = \{f(q_1)^{-a} f(k^a n + b q_1^a)\}_{n\geq 0} \in \zeta\cdot \mathcal{K}_k(f),$$ where $\zeta$ is some root of unity. Since $f$ is $k$-automatic, we see it is $l$-automatic too.  But $k$ and $l$ are multiplicatively independent since $s>1$, so Cobham's theorem \cite{C1969} gives the result.  
\end{proof}

Relating this to the question of Erd\H{o}s, let us now suppose that $f(n)=\pm 1$ is a $k$-automatic completely multiplicative function. To examine the growth of the summatory function, Proposition \ref{q} gives that we need only consider $k=q^m$ for some prime $q$. Since $f$ is completely multiplicative and real, Proposition \ref{SPf} gives that $f$ agrees with a real Dirichlet character modulo $q^m$ on values coprime to $q$. This leaves only two options for the Dirichlet character, it is either the trivial character or the quadratic character, and moreover, these options allow us to work modulo $q$ (independent of the value of $m$).

Set $$F(x) := \sum_{\substack{1\leqslant n \leqslant x\\ (n,q)=1}} f(n)\quad\mbox{and}\quad G(x):=\sum_{n\leqslant x} f(n).$$ Then $$F(x+q)=F(x)+c,$$ for some constant $c$. The two choices for Dirichlet characters modulo $q$ yield two possibilities for $c$. If we are dealing with trivial Dirichlet character modulo $q$, then $c=q-1$, and if we are dealing with the quadratic character modulo $q$, then $c=0$. 

Notice that $F$ and $G$ satisfy the relationship $$G(x)=F(x)+f(q)F(x/q)+f(q)^2F(x/q^2)+\cdots,$$ where the sum ends once $q^i \geqslant x.$

If $c$ is nonzero, as stated above, we have $c=q-1$. Note that $$x\cdot\frac{q-1}{q} - q\leqslant F(x)\leqslant x\cdot\frac{q-1}{q}+ q,$$ for all $x$. 

Let $i$ be the first natural number such that $q^i \geqslant x$, and suppose that $x$ is large enough so that $i\geqslant 2.$ Then \begin{align*} G(x) &\geqslant x\cdot\frac{q-1}{q}-q-x\cdot\frac{q-1}{q^2}-q+x\cdot\frac{q-1}{q^3}-q - \cdots +(-1)^{i+1}x\cdot\frac{q-1}{q^i} -q,\\
&\geqslant x(q-1)\left(\frac{1}{q}-\frac{1}{q^2}+\frac{1}{q^3}-\cdots\right) - q(1+\log_q(x))\\
&>x\cdot\frac{(q-1)^2}{q^2} - q(1+\log_q(x)) 
\end{align*} which goes to infinity faster than $\log x$ for any $q\geqslant 2$.

If $c$ is zero, then the function $f$ agrees with the quadratic character modulo the odd prime $q$. Again we will argue using $$G(x)=F(x)+f(q)F(x/q)+f(q)^2F(x/q^2)+\cdots,$$ but now with the additional property that $F(x+q)=F(x)$. Indeed, this additional property gives that $F(qn)=0$ for all $n$. Now since $f$ is multiplicative, we necessarily have $f(1)=F(1)=1$, and so also, $F(qn+1)=1$, since $F(x)$ is periodic.

For this case, consider the number $$[\underbrace{1010101010\cdots 10}_{10\ written\ m\ times}]_{q} = [(10)^m]_{q}.$$ Then \begin{align*}G( [(10)^m]_{q})&= F( [(10)^m]_{q})+f(q)F([(10)^{m-1}1]_q)+f(q)^2F([(10)^{m-1}]_q)+\cdots\\
&=f(q)\sum_{i=0}^{m-1} F([(10)^{m-1}1]_q)\\
&=f(q)m,
\end{align*} and so $|G(x)|$ grows at least logarithmically.

Since $c$ is zero in this case, we have $|F(x)|\leqslant q,$ so that also $|G(x)|\leqslant q(1+\log_q x).$

The above argument allows us to finish with the following result, which answers a question of Borwein, Choi, and Coons \cite[Question 3]{BCC2010}.

\begin{proposition} Let $f(n)=\pm 1$ be a completely multiplicative function. If $f$ is $k$-automatic for some positive integer $k$, then there is a $C>0$ so that $$\max_{N\leqslant x}\Bigg|\sum_{n\leqslant N} f(n)\Bigg|>C\log x.$$ Moreover, if $\sum_{n\leqslant x} f(x)=o(x),$ then $$\max_{N\leqslant x}\Bigg|\sum_{n\leqslant N} f(n)\Bigg|\asymp \log x.$$
\end{proposition}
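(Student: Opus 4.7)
The plan is to package the reductions and computations already carried out in the discussion preceding the statement. Because $f$ only takes values $\pm 1$, it never vanishes, so Proposition~\ref{q} forces $k=q^m$ for some prime $q$ and integer $m\geqslant 1$. Proposition~\ref{SPf}, applied with $\ell=0$, then says that the value of $f(n)$ depends only on $n\pmod{q^r}$ for integers $n$ coprime to $q$; complete multiplicativity plus the fact that $f$ is real-valued forces this restriction to come from either the trivial character or the quadratic character modulo $q$. I would record this in the sharper form used above: writing $F(x)=\sum_{n\leqslant x,\,(n,q)=1}f(n)$, we have $F(x+q)=F(x)+c$ with $c=q-1$ (trivial case) or $c=0$ (quadratic case).

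Next I would write down the pivotal identity
\[
G(x)=\sum_{i\geqslant 0}f(q)^{i}\,F(x/q^{i}),
\]
valid because $f$ is completely multiplicative and $f(q)=\pm 1$; the sum terminates when $q^{i}>x$, giving at most $1+\log_{q}x$ terms. In Case~1 ($c=q-1$), the telescoping estimate already displayed in the text yields $G(x)>x(q-1)^{2}/q^{2}-q(1+\log_{q}x)$, which grows linearly, so $\max_{N\leqslant x}|G(N)|\gg x$ and in particular exceeds $C\log x$ for any $C>0$ and all large $x$. Observe that in this case $\sum_{n\leqslant x}f(n)=\Theta(x)$, so the $o(x)$ hypothesis of the "moreover" clause excludes Case~1.

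In Case~2 ($c=0$), $F$ is periodic modulo $q$, hence $|F(x)|\leqslant q$ uniformly; substituting into the identity for $G$ gives the upper bound $|G(x)|\leqslant q(1+\log_{q}x)=O(\log x)$. For the matching lower bound I would use the explicit construction from the text: evaluating $G$ at $x_{m}=[(10)^{m}]_{q}$ and exploiting $F(qn)=0$ together with $F(qn+1)=1$, the telescoping produces $|G(x_{m})|=m$; since $\log x_{m}\asymp m$, this gives $\max_{N\leqslant x}|G(N)|\gg\log x$. Combining the two bounds in Case~2 yields $\max_{N\leqslant x}|G(N)|\asymp\log x$, which together with the $o(x)$ hypothesis proves the "moreover" clause.

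There is no genuine obstacle; the proof is essentially a bookkeeping exercise that collects Propositions~\ref{SPf} and~\ref{q}, splits into the two character types, and invokes the explicit upper and lower bounds computed in the running text. The only point that warrants care is the justification that a real completely multiplicative $k$-automatic function with $k=q^{m}$ really reduces to the two cases $c\in\{0,q-1\}$ modulo $q$ (rather than only modulo $q^{r}$); this follows because $f$ is determined on units modulo $q^{r}$ by its values on $(\mathbb{Z}/q\mathbb{Z})^{\times}$ via the complete multiplicativity and Proposition~\ref{SPf}, and for $q$ odd there are exactly two real characters on $(\mathbb{Z}/q\mathbb{Z})^{\times}$. (The case $q=2$ needs a brief separate check, but there the only $\pm 1$-valued completely multiplicative $2$-automatic function with $f(2)=\pm 1$ is constant on odd integers, which lands in Case~1.)
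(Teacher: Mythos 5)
Your proposal is correct and follows essentially the same route as the paper, whose ``proof'' of this proposition is precisely the running discussion you are packaging: Proposition~\ref{q} to reduce to $k=q^m$, Proposition~\ref{SPf} to reduce to the trivial or quadratic character modulo $q$, the identity $G(x)=\sum_i f(q)^iF(x/q^i)$, the linear lower bound in the trivial case, and the periodicity of $F$ plus the evaluation at $[(10)^m]_q$ in the quadratic case. Your added remarks (that the $o(x)$ hypothesis excludes the trivial-character case, and that $q=2$ deserves a word) are sensible refinements of points the paper leaves implicit, not a different argument.
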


\bibliographystyle{srtnumbered}

\end{document}